\title{Diophantine exponents of lattices.
       \thanks{ This research was supported by RSF grant 14-11-00433}}
\author{Oleg\,N.\,German}
\date{}
\theoremstyle{definition}
\newtheorem{definition}{Definition}
\newtheorem*{notation*}{Notation}
\theoremstyle{remark}
\newtheorem*{remark*}{Remark}
\theoremstyle{plain}
\newtheorem{theorem}{Theorem}
\newtheorem{corollary}{Corollary}
\newtheorem*{statement*}{Statement}
\newtheorem*{corollary*}{Corollary}
\renewcommand{\phi}{\varphi}
\renewcommand{\vec}[1]{\mathbf{#1}}
\renewcommand{\geq}{\geqslant}
\renewcommand{\leq}{\leqslant}
\newcommand{\e}{\varepsilon}
\newcommand{\R}{\mathbb{R}}
\newcommand{\Z}{\mathbb{Z}}
\newcommand{\Q}{\mathbb{Q}}
\newcommand{\La}{\Lambda}
\newcommand{\cL}{\mathcal{L}}
\newcommand{\cM}{\mathcal{M}}
\newcommand{\cP}{\mathcal{P}}
\begin{document}

\maketitle

\begin{abstract}
  In this paper we define Diophantine exponents of lattices and investigate some of their properties. We prove transference inequalities and construct some examples with the help of Schmidt's subspace theorem.
\end{abstract}


\section{Introduction}

There is a large variety of problems where different kinds of \emph{Diophantine exponents} naturally arise. In a rather general setting we have $n$ linearly independent linear forms $\pmb\ell_1(\vec z),\ldots,\pmb\ell_n(\vec z)$ in $d$ real variables, $n<d$. And the question is ``\emph{how small can the $n$-tuple $(\pmb\ell_1(\vec z),\ldots,\pmb\ell_n(\vec z))$ be if $\vec z$ ranges through nonzero integer points?}''. There are two classical ways to measure the ``size'' of this $n$-tuple. The first one is to consider an arbitrary norm, say, the sup-norm, and the second one is to consider the product of the absolute values of the entries. Then, we are to figure out how fast this quantity can tend to zero with the growth of the ``size'' of $\vec z$.

\paragraph{Two examples for $n<d$.}

The simplest examples illustrating these two approaches are the problem of simultaneous approximation of two real numbers and the famous Littlewood conjecture (see also \cite{cassels_GN}, \cite{gruber_lekkerkerker}). They both deal with two forms $\pmb\ell_1(\vec z),\pmb\ell_2(\vec z)$ in three variables with coefficients written in the rows of
\[ \begin{pmatrix}
     \theta_1 & 1 & 0      \\
     \theta_n & 0 & 1
   \end{pmatrix}. \]

Let $|\,\cdot\,|$ denote the sup-norm. Then the supremum of real $\gamma$ such that the inequality
\[ \max_{i=1,2}|\pmb\ell_i(\vec z)|\leq|\vec z|^{-\gamma} \]
admits infinitely many solutions in $\vec z\in\Z^3$ is called the \emph{Diophantine exponent} of the pair $(\theta_1,\theta_2)$, and it describes how well $\theta_1$ and $\theta_2$ can be simultaneously approximated with rationals which have same denominator.

On the other hand, the famous Littlewood conjecture claims that for each $\e>0$ the inequality
\[ \prod_{i=1,2}|\pmb\ell_i(\vec z)|\leq\e z_1^{-1} \]
admits infinitely many solutions in $\vec z=(z_1,z_2,z_3)\in\Z^3$, $z_1\neq0$. Similar to the case of simultaneous approximation, the \emph{multiplicative Diophantine exponent} of the pair $(\theta_1,\theta_2)$ is defined as the supremum of real $\gamma$ such that the inequality
\[ \prod_{i=1,2}|\pmb\ell_i(\vec z)|^{1/2}\leq z_1^{-\gamma} \]
admits infinitely many solutions in $\vec z=(z_1,z_2,z_3)\in\Z^3$, $z_1\neq0$. 

\paragraph{The case $n=d$.}

In the examples given above it was important that the number of linear forms is strictly less than the dimension of the ambient space. This guaranteed that the region where we look for integer points is at least unbounded. But if $n=d$, and the forms are linearly independent, the ``norm'' approach gives us a bounded region, a parallelepiped, which is good for considering something like consecutive minima, but does not allow to define any kinds of Diophantine exponents. However, the ``product'' approach appears to be rather fruitful from this point of view. It leads us to the concept of a \emph{Diophantine exponent of a lattice}.

\section{Lattice exponents}

Let us remind (see \cite{cassels_swinnerton_dyer}) that the Littlewood conjecture is closely connected to the so called Oppenheim conjecture for linear forms, which deals with the lattice
\begin{equation} \label{eq:lattice}
  \La=\Big\{ \big(\pmb\ell_1(\vec z),\ldots,\pmb\ell_d(\vec z)\big)\, \Big|\ \vec z\in\Z^d \Big\},
\end{equation}
where $\pmb\ell_1(\vec z),\ldots,\pmb\ell_d(\vec z)$ are linearly independent linear forms in $d$ variables. It claims that for $d\geq3$ the quantity
\begin{equation} \label{eq:norm_minimum}
  N(\La)=\inf_{\vec z\in\Z^d\backslash\{\vec 0\}}\prod_{1\leq i\leq d}|\pmb\ell_i(\vec z)|,
\end{equation}
which is called the \emph{norm minimum} of $\La$, is positive if and only if $\La$ is similar modulo the action of the group of diagonal matrices to the lattice of $\cM$, where $\cM$ is a complete module in a totally real algebraic extension of $\Q$ of degree $d$ (cf. \cite{borevich_shafarevich}). Thus, if we define for each $\vec x=(x_1,\ldots,x_d)\in\R^d$ the quantity
\[ \Pi(\vec x)=\prod_{1\leq i\leq d}|x_i|^{1/d}, \]
we can see that the Oppenheim conjecture proposes a criterion for $\Pi(\vec x)$ to be bounded away from zero at nonzero points of $\La$. But if it attains values however small, then we can talk about a corresponding Diophantine exponent. As before, we use $|\,\cdot\,|$ to denote the sup-norm.

\begin{definition} \label{d:lattice_exponent}
  We define the \emph{Diophantine exponent} of $\La$ as the supremum of real $\gamma$ such that the inequality
  \[ \Pi(\vec x)\leq |\vec x|^{-\gamma} \]
  admits infinitely many solutions in $\vec x\in\La$. We denote it by $\omega(\La)$.
\end{definition}

It follows immediately from Minkowski's convex body theorem that for each $\La$ we have the trivial inequality
\[ \omega(\La)\geq0. \]
At the same time we have $\omega(\La)=0$ whenever $N(\La)>0$. For instance, this holds for any lattice of a complete module in a totally real algebraic extension of $\Q$, which, by the way, makes the ``if'' part of the Oppenheim conjecture obvious.


There is another family of lattices for which we have $\omega(\La)=0$. It is provided by the famous subspace theorem proved by W.\,M.\,Schmidt \cite{schmidt_subspace} in 1972 (see also \cite{bombieri_gubler}).

\begin{theorem}[Schmidt'a subspace theorem, 1972]
  If $\pmb\ell_1(\vec z),\ldots,\pmb\ell_d(\vec z)$ are linearly independent linear forms in $d$ variables with algebraic coefficients, then for each $\e>0$ there are finitely many proper subspaces of $\Q^d$ containing all the integer points satisfying
  \vskip -3mm
  \[ \prod_{1\leq i\leq d}\big|\pmb\ell_i(\vec z)\big|<|\vec z|^{-\e}. \]
\end{theorem}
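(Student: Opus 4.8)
This statement is W.\,M.\,Schmidt's subspace theorem, a deep result whose complete proof occupies a large part of his monograph, so here I can only describe the strategy. The plan is to argue by contradiction: assuming that for some fixed $\e>0$ the integer solutions $\vec z$ of $\prod_i|\pmb\ell_i(\vec z)|<|\vec z|^{-\e}$ are \emph{not} contained in finitely many proper rational subspaces, I would extract from them an infinite sequence possessing strong uniformity. First I would normalise: rescaling the forms we may assume $\det(\pmb\ell_1,\ldots,\pmb\ell_d)=\pm1$, and after a further change of variables the solution set can be assumed to live in a fixed bounded region on a logarithmic scale. Then a pigeonhole argument splits the solutions into finitely many classes according to the orders of magnitude of the individual $|\pmb\ell_i(\vec z)|$ relative to $|\vec z|$, so it suffices to handle one such class. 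Using Minkowski's theorem on successive minima one may, passing to a subsequence, replace the system by an equivalent "reduced" one adapted to the parallelepipeds cut out by the chosen solutions, arranging in particular that the heights $H_j=|\vec z_j|$ of the solutions we keep grow as fast as we wish.

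The core is the Thue--Siegel--Roth machinery in several blocks of variables. I would fix a large integer $m=m(\e,d)$ and very good solutions $\vec z_1,\ldots,\vec z_m$ with rapidly increasing heights, and then, by Siegel's lemma in the sharp Bombieri--Vaaler form (equivalently, Minkowski's theorem applied to an auxiliary lattice), construct a nonzero polynomial $P(\vec x_1,\ldots,\vec x_m)$ with integer coefficients, of prescribed multidegree $(r_1,\ldots,r_m)$ with the products $r_j\log H_j$ nearly equal and logarithmic height $\ll\sum_j r_j$, vanishing to high order -- i.e.\ of large \emph{index} for the weights $1/r_j$ -- at the rational point attached to the flags of the $\pmb\ell_i$. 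One then bounds the index of $P$ at $(\vec z_1,\ldots,\vec z_m)$ from two sides. On one hand a Taylor-expansion estimate, together with the smallness of the values $|\pmb\ell_i(\vec z_j)|$ guaranteed within the fixed class, forces this index to be \emph{small}, since any nonvanishing derivative of $P$ there would be a nonzero integer of absolute value below $1$. On the other hand a higher-dimensional generalisation of Roth's lemma -- Schmidt's index inequality, or its cleaner modern incarnation via the Dyson--Esnault--Viehweg lemma on products of projective lines -- shows that a polynomial of controlled multidegree and height, evaluated at a point whose coordinate vectors are linearly independent and whose heights grow fast enough, cannot have index that \emph{large}. Comparing the two bounds produces a contradiction unless the vectors $\vec z_1,\ldots,\vec z_m$ fail to span $\Q^d$.

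It remains to turn the contradiction into the desired conclusion. One organises the argument so that, within each approximation class, all solutions of height exceeding an explicit bound $H_0(\e,d)$ must lie in a single proper rational subspace: otherwise one could choose among them $m$ linearly independent vectors of sufficiently spread-out heights and run the machinery above. The finitely many classes, together with the finitely many solutions of height at most $H_0$, then yield the finitely many proper subspaces claimed. The main obstacle is unquestionably the "index not too large" step: this is precisely where Roth's one-variable argument refuses to generalise cheaply, and it demands either Schmidt's intricate inductive treatment of the index or the geometric Dyson-type lemma on products of lines; by comparison the pigeonhole reductions, the geometry-of-numbers normalisation, the construction of $P$ via Siegel's lemma, and the Taylor estimate are routine.
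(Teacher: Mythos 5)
The paper does not prove this statement at all: it is quoted verbatim as Schmidt's 1972 theorem with a citation to \emph{Norm form equations}, and is used as a black box in the two corollaries that follow. So there is no ``paper's proof'' to compare yours against, and the honest assessment is that your text is not a proof either --- it is a (mostly accurate) road map of Schmidt's argument that explicitly defers every hard step. You correctly identify the architecture: reduction by geometry of numbers and a pigeonhole decomposition into approximation classes, construction of an auxiliary multihomogeneous polynomial of large index via Siegel's lemma, the Taylor/integrality argument forcing the index at the chosen solution tuple to be small, and the generalized Roth lemma (Schmidt's index theorem, or a Dyson--Esnault--Viehweg lemma) forcing it to be large, with the contradiction confining large solutions to a single proper subspace per class. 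That is the right skeleton.

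The genuine gap is that the entire mathematical content lives in the steps you wave at. Two in particular cannot be treated as routine. First, the geometry-of-numbers reduction is not just a normalisation: Schmidt's proof runs through the successive minima of the parametrized parallelepipeds and Mahler's theory of compound convex bodies in order to pass from statements about the last minimum to statements about subspaces of every intermediate dimension; without this machinery the contradiction you derive only controls solutions spanning all of $\Q^d$, not their distribution among lower-dimensional subspaces. Second, as you yourself note, the ``index not too large'' step is the heart of the matter and requires either Schmidt's inductive index inequality or a genuine Dyson-type lemma on products of projective lines; naming it is not proving it. For the purposes of this paper none of this matters --- the author cites the theorem rather than proving it, and your sketch would at best replace one citation with a longer one --- but as a standalone proof attempt it establishes nothing beyond the (correct) claim that such a proof exists in the literature.
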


\begin{corollary} \label{cor:schmidt_finitely_many_points}
  Let $\pmb\ell_1(\vec z),\ldots,\pmb\ell_d(\vec z)$ be linearly independent linear forms in $d$ variables with algebraic coefficients. Suppose that for each $k$-tuple $(i_1,\ldots,i_k)$, $1\leq i_1<\ldots<i_k\leq d$, $1\leq k\leq d$,
  the coefficients of the multivector
  \[ \pmb\ell_{i_1}\wedge\ldots\wedge\pmb\ell_{i_k} \]
  are linearly independent over $\Q$.
  Then for each $\e>0$ there are only finitely many points $\vec z\in\Z^d$ satisfying
  \[ \prod_{1\leq i\leq d}\big|\pmb\ell_i(\vec z)\big|<|\vec z|^{-\e}. \]
\end{corollary}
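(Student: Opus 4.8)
The plan is to deduce this from Schmidt's subspace theorem by an induction on the dimension $d$, using the wedge-product hypothesis to rule out the accumulation of solutions on any proper rational subspace. First I would apply the subspace theorem directly: for a given $\e>0$, all integer solutions $\vec z$ of $\prod_i|\pmb\ell_i(\vec z)|<|\vec z|^{-\e}$ lie in a finite union of proper rational subspaces $V_1,\ldots,V_m\subset\Q^d$. It therefore suffices to show that each $V_j$ contains only finitely many solutions. Fix such a subspace $V$ of dimension $k<d$, choose a basis and hence a linear parametrization $\vec z = A\vec y$ with $\vec y\in\Z^k$ (up to passing to a sublattice of finite index, which does not affect finiteness), so that the restricted forms $\pmb\ell_i(A\vec y)$ become $d$ linear forms in $k$ variables.

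The key point is then a combinatorial/linear-algebra step: among the $d$ restricted forms $\pmb\ell_1(A\vec y),\ldots,\pmb\ell_d(A\vec y)$ in $k$ variables, I want to extract $k$ of them, say indexed by $I=\{i_1<\ldots<i_k\}$, that remain linearly independent as forms on $\Q^k$, and that moreover still satisfy the hypothesis of the corollary (all sub-wedges have $\Q$-linearly independent coefficients). The hypothesis on $\pmb\ell_{i_1}\wedge\ldots\wedge\pmb\ell_{i_k}$ having $\Q$-linearly independent coefficients is exactly what guarantees that the restriction of this multivector to $V$ — equivalently, the determinant of the $k\times k$ system $(\pmb\ell_{i_j}(A\,\cdot\,))_j$ — is nonzero, since $V$ is a rational subspace and that determinant is a fixed rational linear combination of the (nonzero, $\Q$-independent, hence not all simultaneously vanishing) coordinates of the multivector. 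So for \emph{every} $k$-subset $I$ the restricted forms $\{\pmb\ell_i(A\,\cdot\,)\}_{i\in I}$ are linearly independent over $\R$, and in particular the analogous wedge-independence condition is inherited on $\Q^k$.

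With such an index set $I$ fixed, I bound the full product from below: for the discarded forms $i\notin I$ one has the trivial estimate $|\pmb\ell_i(A\vec y)|\ll|\vec y|\ll|\vec z|$, while on $V$ one also has $|\vec y|\asymp|\vec z|$; hence
\[
  \prod_{1\leq i\leq d}|\pmb\ell_i(\vec z)| \;\gg\; |\vec z|^{-(d-k)}\prod_{i\in I}|\pmb\ell_i(A\vec y)|.
\]
Thus a solution of $\prod_i|\pmb\ell_i(\vec z)|<|\vec z|^{-\e}$ inside $V$ forces $\prod_{i\in I}|\pmb\ell_i(A\vec y)| \ll |\vec y|^{-\e'}$ for some $\e'>0$ depending on $\e$, $d$, $k$. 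Since the $k$ forms $\{\pmb\ell_i(A\,\cdot\,)\}_{i\in I}$ are linearly independent with algebraic coefficients and satisfy the wedge hypothesis in dimension $k$, the inductive hypothesis of the corollary (the base case $k=1$ being the statement that an algebraic linear form in one variable with an irrational-to-nonzero coefficient, or rather a single nonzero algebraic form, takes values $\gg|y|$ on nonzero integers — true since $\pmb\ell(y)=\alpha y$ with $\alpha\ne0$) yields that only finitely many $\vec y$, hence only finitely many $\vec z\in V$, can satisfy it. Summing over the finitely many $V_j$ completes the argument.

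The main obstacle I anticipate is the linear-algebra step tying the wedge-coefficient hypothesis to the nonvanishing of the restricted $k\times k$ determinant, and checking that the hypothesis genuinely descends to the restricted forms on $\Q^k$ so that the induction can be applied. One must be careful that restricting to a rational subspace and re-coordinatizing does not secretly create a rational linear dependence among the coefficients of some sub-wedge $\pmb\ell_{j_1}(A\,\cdot\,)\wedge\ldots\wedge\pmb\ell_{j_r}(A\,\cdot\,)$; this should follow because the coordinates of that restricted multivector are fixed $\Q$-linear combinations of the coordinates of the original multivector $\pmb\ell_{j_1}\wedge\ldots\wedge\pmb\ell_{j_r}$, and a $\Q$-dependence among the former would pull back to one among the latter, contrary to hypothesis — but writing this out cleanly, with the correct bookkeeping of indices and the finite-index sublattice issue, is where the real care is needed.
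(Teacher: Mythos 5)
Your overall skeleton matches the paper's: reduce to finitely many rational subspaces via the subspace theorem, use the $\Q$-independence of the wedge coefficients to show that any $k$ of the forms restrict to linearly independent forms on a $k$-dimensional rational subspace (your pairing-with-a-rational-$k$-vector argument is exactly the paper's opening step), and induct on the dimension. The inheritance of the wedge hypothesis under restriction, which you flag as delicate, is in fact fine: the restriction map on coefficient spaces of $r$-th exterior powers is a surjective $\Q$-linear map, so its rows are $\Q$-linearly independent and any rational dependence among the restricted coefficients pulls back to a nontrivial rational dependence among the original ones.

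The genuine gap is the step where you discard the $d-k$ forms outside $I$. You invoke the \emph{upper} bound $|\pmb\ell_i(A\vec y)|\ll|\vec z|$ for $i\notin I$, but to deduce that $\prod_{i\in I}|\pmb\ell_i(A\vec y)|$ is small from the smallness of the full product you need a \emph{lower} bound on $\prod_{i\notin I}|\pmb\ell_i(A\vec y)|$; bounding the discarded factors from above only makes the remaining quotient larger. Concretely, your displayed inequality $\prod_{i}|\pmb\ell_i|\gg|\vec z|^{-(d-k)}\prod_{i\in I}|\pmb\ell_i|$ combined with $\prod_i|\pmb\ell_i|<|\vec z|^{-\e}$ yields only $\prod_{i\in I}|\pmb\ell_i|\ll|\vec z|^{\,d-k-\e}$, a growing bound from which no $\e'>0$ can be extracted; and the lower bound $|\pmb\ell_i(A\vec y)|\gg|\vec z|^{-1}$ that your inequality tacitly asserts is false for an arbitrary \emph{fixed} $I$, since a discarded form may be extremely small on $V$. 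The repair --- and this is precisely what the paper does, one dimension at a time --- is to choose $I$ adaptively: since any $k$ of the restricted forms are linearly independent, the maximum of the $k$ smallest values $|\pmb\ell_i(A\vec y)|$ is $\gg|\vec y|$, hence for $|\vec y|$ exceeding some threshold $R$ the $d-k$ \emph{largest} restricted forms are all at least $1$ and can be dropped without increasing the product; one then covers the solutions by the union over the finitely many choices of $I$ (the paper's union over $j$) and applies the inductive hypothesis to each. Without this adaptive choice and the threshold $R$, the induction does not close.
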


\begin{proof}
  It follows from the restriction on the coefficients that for each $k$-dimensional rational subspace $\cL$ of $\R^d$ any $k$ of the given linear forms induce $k$ linearly independent linear forms in $\cL$.
  
  Let now $\cL$ be one of the rational subspaces mentioned in the subspace theorem. We may assume that $\dim\cL=d-1$ and identify it with $\R^{d-1}$ in such a way that $\cL\cap\Z^d$ turns into $\Z^{d-1}$. Then the initial forms $\pmb\ell_1(\vec z),\ldots,\pmb\ell_d(\vec z)$ induce new forms $\tilde{\pmb\ell}_1(\tilde{\vec z}),\ldots,\tilde{\pmb\ell}_d(\tilde{\vec z})$ in $d-1$ variables with algebraic coefficients, such that any $d-1$ of those forms are linearly independent. There is a constant $R$ depending only on the coefficients of the forms such that the set
  \[ \Big\{ \tilde{\vec z}\in\R^{d-1} \,\Big| \prod_{1\leq i\leq d}|\tilde{\pmb\ell}_i(\tilde{\vec z})|<|\tilde{\vec z}|^{-\e},\ |\tilde{\vec z}|>R \Big\} \]
  is contained in the union
  \[ \bigcup_{1\leq j\leq d}\Big\{ \tilde{\vec z}\in\R^{d-1} \,\Big| \prod_{\substack{1\leq i\leq d \\ i\neq j}}|\tilde{\pmb\ell}_i(\tilde{\vec z})|<|\tilde{\vec z}|^{-\e} \Big\}. \]
  The rest follows by induction, for the base case $d=2$ is obvious.
\end{proof}

\begin{corollary} \label{cor:schmidt_zero_omega}
  Let $\pmb\ell_1(\vec z),\ldots,\pmb\ell_d(\vec z)$ be as in Corollary \ref{cor:schmidt_finitely_many_points} and let $\La$ be be defined by \eqref{eq:lattice}. Then 
  \[ \omega(\La)=0. \]
\end{corollary}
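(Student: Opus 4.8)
The plan is to reduce the statement to Corollary \ref{cor:schmidt_finitely_many_points} by comparing the two ways of measuring size: $\Pi(\vec x)$ versus $|\vec x|$ on the one hand, and $\prod_{i}|\pmb\ell_i(\vec z)|$ versus $|\vec z|$ on the other. Since the forms $\pmb\ell_1,\ldots,\pmb\ell_d$ are linearly independent, the map $\vec z\mapsto\vec x=(\pmb\ell_1(\vec z),\ldots,\pmb\ell_d(\vec z))$ is a linear automorphism of $\R^d$ carrying $\Z^d$ onto $\La$; in particular it restricts to a bijection between $\Z^d\setminus\{\vec 0\}$ and $\La\setminus\{\vec 0\}$, and there are constants $c_1,c_2>0$, depending only on the coefficients of the forms, such that $c_1|\vec z|\leq|\vec x|\leq c_2|\vec z|$.

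First I would fix an arbitrary $\gamma>0$ and put $\e=d\gamma/2$. Suppose $\vec x\in\La\setminus\{\vec 0\}$ satisfies $\Pi(\vec x)\leq|\vec x|^{-\gamma}$. Raising this to the $d$-th power and using $\Pi(\vec x)^d=\prod_{i}|x_i|=\prod_{i}|\pmb\ell_i(\vec z)|$ together with $|\vec x|\geq c_1|\vec z|$ gives
\[ \prod_{1\leq i\leq d}|\pmb\ell_i(\vec z)|\leq|\vec x|^{-d\gamma}\leq c_1^{-d\gamma}|\vec z|^{-d\gamma}. \]
For every $\vec z$ with $|\vec z|$ large enough, namely $|\vec z|>\max(1,c_1^{-2})$, the right-hand side is smaller than $|\vec z|^{-d\gamma/2}=|\vec z|^{-\e}$. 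Hence all but finitely many of these points $\vec x$ correspond to integer points $\vec z$ satisfying $\prod_{i}|\pmb\ell_i(\vec z)|<|\vec z|^{-\e}$, and by Corollary \ref{cor:schmidt_finitely_many_points} there are only finitely many such $\vec z$. Therefore the inequality $\Pi(\vec x)\leq|\vec x|^{-\gamma}$ has only finitely many solutions $\vec x\in\La$.

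As $\gamma>0$ was arbitrary, no positive $\gamma$ admits infinitely many solutions in Definition \ref{d:lattice_exponent}, so $\omega(\La)\leq0$; combined with the trivial lower bound $\omega(\La)\geq0$ recalled above, this yields $\omega(\La)=0$. There is no substantial obstacle here beyond bookkeeping with the comparison constants; the only points worth keeping in mind are that linear independence of the forms is exactly what makes the two size functions comparable in both directions, and that the hypothesis of Corollary \ref{cor:schmidt_finitely_many_points} (already in its $k=1$ instance) forces $\pmb\ell_i(\vec z)\neq0$ for $\vec z\neq\vec 0$, so that $\Pi$ is nonzero on $\La\setminus\{\vec 0\}$ and $\omega(\La)$ is well defined.
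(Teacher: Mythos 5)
Your proof is correct and is essentially the paper's own argument: the paper simply notes that $|\vec z|\asymp\max_i|\pmb\ell_i(\vec z)|=|\vec x|$ and invokes Corollary \ref{cor:schmidt_finitely_many_points}, which is exactly the reduction you carry out (with the constants written explicitly). No issues.
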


\begin{proof}
  It suffices to notice that
  \begin{equation} \label{eq:equivalence_of_maxima}
    |\vec z|\asymp\max_{1\leq i\leq d}\big|\pmb\ell_i(\vec z)\big|
  \end{equation}
  and apply Corollary \ref{cor:schmidt_finitely_many_points}.
\end{proof}

It is reasonable to ask whether each positive value of $\omega(\La)$ can be attained, but the corresponding examples are yet to be constructed. As for now, we would like to pay attention to the transference phenomenon.


\section{Transference theorem}

Let $\La$ be an arbitrary lattice in $\R^d$. Consider the \emph{dual} lattice
\[ \La^\ast=\Big\{ \vec y\in\R^d \,\Big|\, \langle\vec y,\vec x\rangle\in\Z\text{ for each }\vec x\in\La \Big\}, \]
where $\langle\,\cdot\,,\,\cdot\,\rangle$ denotes the inner product. It appears that, same as in many other problems of Diophantine approximation, transference theorems can be proved, i.e. statements connecting $\omega(\La)$ and $\omega(\La^\ast)$.

Of course, if $d=2$ then $\La^\ast$ coincides up to a homothety with $\La$ rotated by $\pi/2$, so in the two-dimensional case we obviously have $\omega(\La)=\omega(\La^\ast)$.

\begin{theorem} \label{t:lattice_transference}
  Suppose $d\geq3$. Then
  \begin{equation} \label{eq:lattice_transference}
    \omega(\La)\geq\frac{\omega(\La^\ast)}{(d-1)^2+d(d-2)\omega(\La^\ast)}\,.
  \end{equation}
  Here we mean that if $\omega(\La^\ast)=\infty$, then $\omega(\La)\geq\dfrac{1}{d(d-2)}$\,.
\end{theorem}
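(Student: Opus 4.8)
The plan is to extract from a nonzero point $\vec y \in \La^\ast$ with $\Pi(\vec y)$ very small a whole family of points in $\La$ with $\Pi$ small, exploiting the bilinear duality relation $\langle \vec y, \vec x\rangle \in \Z$. The standard mechanism in such transference arguments is to take a point $\vec y$ witnessing a large lower bound for $\omega(\La^\ast)$, consider the hyperplane $\vec y^\perp$ (or an approximate version given by $|\langle \vec y, \vec x\rangle| < 1$, which forces the inner product to vanish), and observe that $\La$ restricted to that hyperplane is a $(d-1)$-dimensional lattice inside which we must produce points of small $\Pi$. The key quantitative input will be Minkowski's convex body theorem applied inside $\vec y^\perp$: the covolume of $\La \cap \vec y^\perp$ is controlled by $|\vec y|$ (up to the covolume of $\La$, which is a harmless constant), so a suitably scaled box inside $\vec y^\perp$ whose coordinate-widths are chosen according to the sizes $|y_i|$ will contain a nonzero lattice point, and we get to choose the aspect ratio of that box.

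Concretely, I would fix $\vec y \in \La^\ast \setminus \{\vec 0\}$ with $\Pi(\vec y) \le |\vec y|^{-\gamma}$ for $\gamma$ slightly below $\omega(\La^\ast)$, and order coordinates so that $|y_d|$ is the largest. One finds $\vec x \in \La \cap \vec y^\perp$, $\vec x \ne \vec 0$, lying in a parallelepiped defined by $|x_i| \le c_i$ for $i = 1, \dots, d$, with the $c_i$ constrained by two conditions: the volume condition from Minkowski inside $\vec y^\perp$ forces $\prod_{i=1}^{d} c_i \gg |y_d| \asymp |\vec y|$ (the factor $|y_d|$ being the ratio between the induced covolume on the hyperplane and $\covol \La$), and one wants $\Pi(\vec x) = \big(\prod |x_i|\big)^{1/d}$ small relative to $|\vec x| \le \max c_i$. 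The freedom is in distributing the product $|\vec y|$ among the $c_i$; to make $\Pi(\vec x)/|\vec x|^{\,\text{something}}$ as small as possible one balances all but one of the $c_i$ to be equal and tiny, say $c_1 = \dots = c_{d-1} = t$ and $c_d = T$ with $t^{d-1} T \asymp |\vec y|$ and $T \ge t$. Then $\Pi(\vec x) \le (t^{d-1}T)^{1/d} \asymp |\vec y|^{1/d}$ while $|\vec x| \le T$, and one wants to take $t$ as small, hence $T$ as large, as the volume constraint $t^{d-1}T \asymp |\vec y|$ together with $t \ge $ (some floor) allows; the floor on $t$ comes precisely from the requirement that the box be large enough in the $y$-direction to enclose a lattice point, i.e. from $\Pi(\vec y) \le |\vec y|^{-\gamma}$ which says the $|y_i|$ are spread out. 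Tracking this optimization — with $|\vec x| \le T \asymp |\vec y|\, t^{-(d-1)}$ and $\Pi(\vec x) \le |\vec y|^{1/d}$, hence $\Pi(\vec x) \le |\vec x|^{-\omega}$ with $\omega$ coming out of eliminating $t$ and $T$ — should reproduce the exponent $\tfrac{\omega(\La^\ast)}{(d-1)^2 + d(d-2)\omega(\La^\ast)}$, and the degenerate case $\omega(\La^\ast) = \infty$ corresponds to letting the floor on $t$ go to zero, giving $\tfrac{1}{d(d-2)}$.

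The main obstacle will be the bookkeeping that converts ``$\Pi(\vec y)$ small'' into a genuine lower bound on the admissible width $T$ in the single distinguished direction. One has to be careful that the coordinatewise sizes $|y_i|$ are not just small in product but individually bounded below in a usable way — or, more precisely, one should work with the quantities $|y_i|$ directly rather than with $\Pi(\vec y)$, choosing $c_i \asymp |\vec y| \cdot |y_i|^{-1} \cdot |y_d|^{-1} \cdot(\text{correction})$ type scalings so that the Minkowski box is ``dual'' to the shape of $\vec y$, and only at the end invoke $\prod |y_i|^{1/d} \le |\vec y|^{-\gamma}$ together with $|\vec y| \asymp \max|y_i|$. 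A secondary point to handle cleanly is the passage from ``$|\langle \vec y, \vec x\rangle| < 1$'' to ``$\langle \vec y, \vec x\rangle = 0$'': since $\vec x \in \La$ and $\vec y \in \La^\ast$ the inner product is an integer, so strict inequality $< 1$ forces it to vanish, but one must set up the Minkowski box so that this bound holds, which eats one of the $d$ directions and explains why the effective dimension of the construction is $d-1$ and why $(d-1)^2$ and $d-2$ appear. Finally I would need to check that infinitely many distinct $\vec x$ arise: this follows in the usual way because $|\vec y| \to \infty$ along a sequence witnessing $\omega(\La^\ast)$, and the resulting $\vec x$ have $|\vec x| \to \infty$ (their $\Pi$ tending to $0$ while they are nonzero lattice points), so no finite set can exhaust them.
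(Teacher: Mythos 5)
Your high-level strategy is sound, and it is in fact the engine hiding inside the paper's argument: the paper delegates the implication ``nonzero point of $\La^\ast$ in $\cP^\ast$ $\Rightarrow$ nonzero point of $\La$ in $c\cP$'' to a cited Mahler-type transference theorem, and proving that implication by Minkowski's theorem inside $\vec y^\perp$ (using $\covol(\La\cap\vec y^\perp)\asymp|\vec y|$ and the fact that an integer inner product of absolute value less than $1$ must vanish) is exactly the classical route; the exponent bookkeeping afterwards is identical. The problem is that the concrete box you set up in the middle of the proposal is wrong and would derail the proof. First, the Minkowski condition in $\vec y^\perp$ is not $\prod_i c_i\gg|\vec y|$: the $(d-1)$-volume of the central section of $\{|x_i|\le c_i\}$ by $\vec y^\perp$ is comparable to $|\vec y|\prod_i c_i\big/\sum_i c_i|y_i|$, so against the covolume $\asymp|\vec y|$ the condition reads $\prod_i c_i\gg\max_i c_i|y_i|$. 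Second, and more seriously, your box points the wrong way: you make it long ($c_d=T$) in the direction where $|y_d|=|\vec y|$ is \emph{largest}, but orthogonality to $\vec y$ forces $|x_d|\le\sum_{i<d}|x_i|\,|y_i|/|y_d|\le(d-1)t$, so the length $T$ buys nothing, and your own estimate $\Pi(\vec x)\le(t^{d-1}T)^{1/d}\asymp|\vec y|^{1/d}$ \emph{grows} as $|\vec y|\to\infty$ --- no negative exponent can be extracted from it. The box must be long where $|y_i|$ is \emph{small}: the correct choice is $c_i=|y_i|^{-1}\bigl(\prod_j|y_j|\bigr)^{1/(d-1)}$, which is precisely the $\eta_i$ of the paper's proof; with it the Minkowski condition holds, $\Pi(\vec x)\ll\Pi(\vec y)^{1/(d-1)}$, $|\vec x|\le\max_i c_i\ll|\vec y|^{d-1}\Pi(\vec y)^{-d(d-2)/(d-1)}$, and eliminating $|\vec y|$ gives the exponent $\gamma/((d-1)^2+d(d-2)\gamma)$. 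Your closing ``more precisely'' sentence gestures at this dual shape, but the optimization you actually carry out does not produce it.

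Two degenerate situations also need explicit treatment. If the witnessing dual points $\vec y$ have a vanishing coordinate (one way in which $\omega(\La^\ast)=\infty$ occurs, and the paper's Case~II), the dual-shaped box is undefined; one must instead let the box be arbitrarily long in the direction of the vanishing coordinate and rebalance the remaining $d-1$ widths, which is where the boundary value $1/(d(d-2))$ comes from --- your ``let the floor on $t$ go to zero'' remark lives inside the broken $t,T$ framework and does not cover this. And to conclude that infinitely many distinct $\vec x$ arise from $\Pi(\vec x)\to0$, you must separately dispose of the case where $\La$ itself has a nonzero point on a coordinate hyperplane (a single such point has $\Pi=0$ and could absorb all your solutions); there $\omega(\La)=\infty$ trivially, as the paper notes, but the case has to be split off.
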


We shall prove Theorem \ref{t:lattice_transference} with the help of the concept of a \emph{pseudo-compound parallelepiped} (see also \cite{schmidt_DA}) and a general transference theorem proved in \cite{german_evdokimov}. We give the definition in the simplest case, as this is the only case we need.

\begin{definition}
  Given positive numbers $\eta_1,\ldots,\eta_d$, consider the parallelepiped
  \[ \cP=\Big\{ \vec x=(x_1,\ldots,x_d)\in\R^d \,\Big|\, |x_i|\leq\eta_i,\ i=1,\ldots,d \Big\}. \]
  Then the parallelepiped
  \[ \cP^\ast=\Big\{ \vec x=(x_1,\ldots,x_d)\in\R^d \,\Big|\, |x_i|\leq\frac1{\eta_i}\prod_{1\leq j\leq d}\eta_j,\ i=1,\ldots,d \Big\} \]
  is called \emph{pseudo-compound} for $\Pi$.
\end{definition}

The transference principle discovered by Khintchine \cite{khintchine_palermo} for a particular case led eventually to the following rather general observation.

\begin{theorem}[G., Evdokimov, 2015] \label{t:german_evdokimov}
  Set $c=d^{\frac{1}{2(d-2)}}$ and suppose $\det\La=1$. Then
  \[ \cP^\ast\cap\La^\ast\neq\{\vec 0\}\implies c\cP\cap\La\neq\{\vec 0\}. \] 
\end{theorem}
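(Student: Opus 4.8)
The plan is to pass from the geometric statement to a comparison of successive minima and to recognise $\cP^\ast$ as a pseudo-compound in the sense of Mahler. Writing $\lambda_1\le\ldots\le\lambda_d$ for the successive minima of $\cP$ with respect to $\La$ and $\mu_1\le\ldots\le\mu_d$ for those of $\cP^\ast$ with respect to $\La^\ast$, the hypothesis $\cP^\ast\cap\La^\ast\neq\{\vec 0\}$ is exactly $\mu_1\le1$, and the conclusion $c\cP\cap\La\neq\{\vec 0\}$ is exactly $\lambda_1\le c$. So the whole theorem is the implication $\mu_1\le1\Rightarrow\lambda_1\le c$, and everything reduces to bounding $\lambda_1$ in terms of $\mu_1$.

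First I would make the pseudo-compound structure explicit. Identify $\wedge^{d-1}\R^d$ with $\R^d$ by Hodge duality; since $\det\La=1$, this identification carries the compound lattice $\wedge^{d-1}\La$ isometrically onto the dual lattice $\La^\ast$. Under it the $(d-1)$st compound convex body $C_{d-1}(\cP)=\conv\{\vec x_1\wedge\ldots\wedge\vec x_{d-1}\mid\vec x_i\in\cP\}$ has the axis vectors $\pm(\Pi/\eta_i)\vec e_i$ among its extreme points (take the $\vec x_j$ to be the coordinate edges of $\cP$), so it contains the cross-polytope $Q=\conv\{\pm(\Pi/\eta_i)\vec e_i\}$; in the opposite direction a Hadamard bound on $(d-1)\times(d-1)$ minors gives $C_{d-1}(\cP)\subseteq(d-1)^{(d-1)/2}\cP^\ast$. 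Since also $Q\subseteq\cP^\ast\subseteq d\,Q$, the bodies $\cP^\ast$ and $C_{d-1}(\cP)$ are comparable up to a factor depending only on $d$, whence $\mu_1=\lambda_1(\cP^\ast,\La^\ast)$ is comparable to $\lambda_1\big(C_{d-1}(\cP),\wedge^{d-1}\La\big)$. This is precisely why $\cP^\ast$ deserves the name pseudo-compound (cf. \cite{schmidt_DA}).

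The second ingredient is Mahler's theorem on the successive minima of compound bodies, which states that $\lambda_1\big(C_{d-1}(\cP),\wedge^{d-1}\La\big)$ is comparable, with constants depending only on $d$, to the smallest product of $d-1$ of the $\lambda_i$, namely $\lambda_1\cdots\lambda_{d-1}$. The easy half, $\lambda_1(C_{d-1})\le\lambda_1\cdots\lambda_{d-1}$, follows by wedging together vectors realising the $d-1$ smallest minima; the substantial half, that a short vector of the compound lattice cannot be much shorter than any such product, is the real content. Combining the two ingredients, the hypothesis $\mu_1\le1$ forces $\lambda_1\cdots\lambda_{d-1}\le C_d$ for an explicit $C_d$, and since $\lambda_1$ is the smallest factor, $\lambda_1\le C_d^{1/(d-1)}$.

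The hard part is the exact constant. The crude comparison above produces the exponent $1/(d-1)$, i.e.\ $c=d^{1/(d-1)}$, which already coincides with the claimed $d^{1/(2(d-2))}$ at $d=3$ but is weaker for $d\ge4$. To reach the sharp exponent $1/(2(d-2))$ I would avoid the lossy step $\lambda_1^{d-1}\le\lambda_1\cdots\lambda_{d-1}$ and instead feed $\lambda_1\cdots\lambda_{d-1}\le C_d$ into Minkowski's second theorem for $\cP$, namely $\tfrac1{d!}\le\lambda_1\cdots\lambda_d\,\Pi\le1$ (using $\det\La=1$ and $\vol\cP=2^d\Pi$), together with the analogous inequalities for $\cP^\ast$, where $\vol\cP^\ast=2^d\Pi^{d-1}$. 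Balancing these two-sided bounds isolates $\lambda_1$ far more efficiently than the single product does, and optimising the resulting estimate is where the denominator $2(d-2)$ emerges. Thus the genuine obstacle is not the transference mechanism, which is the compound/pseudo-compound comparison above, but the quantitative optimisation of the Mahler and Minkowski constants needed to pin down $c=d^{1/(2(d-2))}$ exactly.
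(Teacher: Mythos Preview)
The paper does not prove this theorem at all: Theorem~\ref{t:german_evdokimov} is quoted from \cite{german_evdokimov} and used as a black box in the proof of Theorem~\ref{t:lattice_transference}. So there is no ``paper's own proof'' to compare your sketch against.

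That said, your sketch has a genuine gap at precisely the point you flag yourself. The compound-body route you describe is Mahler's classical transference mechanism, and it does explain the name ``pseudo-compound''. It honestly yields an inequality of the shape $\lambda_1\cdots\lambda_{d-1}\leq C_d$, from which the only generic way to isolate $\lambda_1$ is the ``lossy'' step $\lambda_1^{d-1}\leq\lambda_1\cdots\lambda_{d-1}$, giving an exponent $1/(d-1)$. Your proposed fix---feeding $\lambda_1\cdots\lambda_{d-1}\leq C_d$ into Minkowski's second theorem $\tfrac{1}{d!}\leq\lambda_1\cdots\lambda_d\,\Pi\leq1$ and its analogue for $\cP^\ast$---does not work as stated: those identities let you control $\lambda_d$ or the full product $\lambda_1\cdots\lambda_d$, but they give no additional leverage on $\lambda_1$ beyond $\lambda_1^{d-1}\leq\lambda_1\cdots\lambda_{d-1}$. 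The claim that ``optimising the resulting estimate is where the denominator $2(d-2)$ emerges'' is an assertion, not an argument, and I do not see how to make it come out of the pieces you have assembled.

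The sharp constant $c=d^{1/(2(d-2))}$ in \cite{german_evdokimov} is obtained by a different, more direct argument rather than by squeezing Mahler's compound machinery harder; your outline reproduces the classical route but does not reach the stated constant.
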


Let us deduce Theorem \ref{t:lattice_transference} from Theorem \ref{t:german_evdokimov}.

\begin{proof}[Proof of Theorem \ref{t:lattice_transference}]
  Since $\omega(\La)$ is invariant under homotheties, we may suppose that $\det\La=1$.
  We consider two cases.
  
  \textbf{Case I:} There are no nonzero points of $\La^\ast$ in the coordinate planes. 
  
  Let us fix an arbitrary positive $\e$. Then there are infinitely many nonzero points $\vec u=(u_1,\ldots,u_d)\in\La^\ast$ such that
  \[ \Pi(\vec u)=|\vec u|^{-\gamma},\qquad \gamma=\gamma(\vec u)\geq
     \begin{cases}
       \omega(\La^\ast)-\e,\text{ if }\omega(\La^\ast)<\infty, \\
       1/\e,\text{ if }\omega(\La^\ast)=\infty.
     \end{cases} \]
  Let us consider any of those points and set
  \[ \cP_{\vec u}=\Big\{ \vec x=(x_1,\ldots,x_d)\in\R^d \,\Big|\, |x_i|\leq u_i,\ i=1,\ldots,d \Big\}. \]
  Since all the $u_i$ are nonzero, $\cP_{\vec u}$ is a non-degenerate parallelepiped. Moreover, $\cP_{\vec u}=\cP^\ast$ for
  \[ \cP=\Big\{ \vec x=(x_1,\ldots,x_d)\in\R^d \,\Big|\, |x_i|\leq\eta_i,\ i=1,\ldots,d \Big\}, \]
  where
  \[ \eta_i=u_i^{-1}\prod_{1\leq j\leq d}u_j^{\frac{1}{d-1}}. \]
  Hence by Theorem \ref{t:german_evdokimov} the parallelepiped $c\cP$ contains a nonzero point $\vec v=(v_1,\ldots,v_d)$ of $\La$. For this point we have
  \[ |\vec v|\leq c\max_{1\leq i\leq d}|\eta_i|\leq c\cdot\frac{\displaystyle\prod_{1\leq i\leq d}|\eta_i|}{\displaystyle\min_{1\leq i\leq d}|\eta_i|^{d-1}}=
     c\cdot\frac{\displaystyle\max_{1\leq i\leq d}|u_i|^{d-1}}{\displaystyle\prod_{1\leq i\leq d}|u_i|^{\frac{d-2}{d-1}}}=
     c\cdot\frac{|\vec u|^{d-1}}{\Pi(\vec u)^{\frac{d(d-2)}{d-1}}}=c|\vec u|^{d-1+\frac{d(d-2)}{d-1}\gamma} \]
  and
  \[ \Pi(\vec v)\leq c\cdot\prod_{1\leq i\leq d}|\eta_i|^{1/d}=c\cdot\prod_{1\leq i\leq d}|u_i|^{\frac{1}{d(d-1)}}=c\Pi(\vec u)^{\frac{1}{d-1}}=c|\vec u|^{-\frac{\gamma}{d-1}}. \]
  Thus,
  \begin{equation} \label{eq:Pi_v_vs_v}
    \Pi(\vec v)\leq c_1|\vec v|^{-\frac{\gamma}{(d-1)^2+d(d-2)\gamma}},\qquad c_1=c_1(d,\gamma).
  \end{equation}
  Notice that $|\vec u|$ may be however large. Hence $\min_{1\leq i\leq d}|\eta_i|$ may be however small. So, if there are no nonzero points of $\La$ in the coordinate planes, we get infinitely many points of $\La$ satisfying \eqref{eq:Pi_v_vs_v}, whence \eqref{eq:lattice_transference} follows. But if there is a nonzero point of $\La$ in a coordinate plane, then clearly $\omega(\La)=\infty$ and \eqref{eq:lattice_transference} holds trivially.

  \textbf{Case II:} There is a nonzero point of $\La^\ast$ in a coordinate plane.
  
  In this case we have $\omega(\La^\ast)=\infty$ and we are to show that 
  \begin{equation} \label{eq:degenerate_inequality}
    \omega(\La)\geq\frac{1}{d(d-2)}\,. 
  \end{equation}
  We may assume that there is a nonzero point $\vec u=(u_1,\ldots,u_d)\in\La^\ast$ with $u_d=0$. Then the $(d-1)$-dimensional subspace orthogonal to $\vec u$ contains the last coordinate axis and a sublattice $\Gamma\subset\La$ of rank $d-1$. Therefore, by Minkowski's convex body theorem there are infinitely many points $\vec v=(v_1,\ldots,v_d)\in\Gamma$ with $v_d\to\infty$ such that
  \[ \max_{1\leq i\leq d-1}|v_i|\leq c_2|v_d|^{-\frac{1}{d-2}},\qquad c_2=c_2(\vec u). \]
  For such $\vec v$ we have
  \[ \Pi(\vec v)\leq\Big(c_2^{d-1}|v_d|^{1-\frac{d-1}{d-2}}\Big)^{\frac{1}{d}}=c_2^{\frac{d-1}{d}}|\vec v|^{-\frac{1}{d(d-2)}}, \]
  whence \eqref{eq:degenerate_inequality} follows immediately.
\end{proof}

\section{Towards spectrum}

Same as in many other Diophantine problems (see \cite{laurent_2dim}, \cite{marnat_sharpness}, \cite{marnat_twisted}) it is reasonable to ask what subset of $(\R\cup\{\infty\})^2$ is formed by the pairs $(\omega(\La),\omega(\La^\ast))$ if $\La$ runs through the space of lattices in $\R^d$. 

As we have already noticed, for $d=2$ we have $\omega(\La)=\omega(\La^\ast)$. Besides that, in this simplest case everything can be described in terms of continued fractions (see \cite{cassels_DA}, \cite{khintchine_CF}), so, it is easy to see that for $d=2$ all the nonnegative values of $\omega(\La)$ are attained.

For $d\geq3$ we have the restrictions
\[ \omega(\La)\geq0,\quad\omega(\La^\ast)\geq0,\quad\omega(\La)\geq\frac{\omega(\La^\ast)}{(d-1)^2+d(d-2)\omega(\La^\ast)} \]
and it is interesting whether they determine the whole spectrum of $(\omega(\La),\omega(\La^\ast))$.

So far we know very little. We know examples of $\La$ with $\omega(\La)=\omega(\La^\ast)=0$. Those are either lattices with positive norm minimum \eqref{eq:norm_minimum}, or the ones provided by Corollaries \ref{cor:schmidt_finitely_many_points} and \ref{cor:schmidt_zero_omega}. Indeed, on one hand, it is well known (see \cite{german_norm_minima_I}, \cite{german_norm_minima_bordeaux}, \cite{skubenko_ndim}) that
\[ N(\La)>0\iff N(\La^\ast)>0. \]
On the other hand, if $\pmb\ell_1(\vec z),\ldots,\pmb\ell_d(\vec z)$ are linearly independent and $\pmb\ell_1^\ast(\vec z),\ldots,\pmb\ell_d^\ast(\vec z)$ are the dual linear forms, then the coefficients of $\pmb\ell_{i_1}(\vec z)\wedge\ldots\wedge\pmb\ell_{i_k}(\vec z)$ coincide up to signs with those of $\pmb\ell_{i_{k+1}}^\ast(\vec z)\wedge\ldots\wedge\pmb\ell_{i_d}^\ast(\vec z)$, where $(i_1,\ldots,i_d)$ is a permutation of $(1,\ldots,d)$. So, those sets of coefficients are simultaneously linearly independent over $\Q$, which means that if $\La$ satisfies the hypothesis of Corollary \ref{cor:schmidt_finitely_many_points}, then so does $\La^\ast$.

It appears that the subspace theorem also provides examples of $\La$ such that
\[ \omega(\La)=\frac{1}{d(d-2)}\,,\quad\omega(\La^\ast)=\infty, \]
proving thus sharpness of Theorem \ref{t:lattice_transference} in one boundary case. 

\begin{theorem} \label{t:sharpness_from_schmidt}
  Let $\pmb\ell_1(\vec z),\ldots,\pmb\ell_d(\vec z)$ be linearly independent linear forms in $d$ variables with algebraic coefficients. Suppose that the first coefficient of the multivector 
  \[ \pmb\ell_1\wedge\ldots\wedge\pmb\ell_{d-1} \] 
  equals zero and that the rest of them are linearly independent over $\Q$. Suppose also that for each $k$-tuple $(i_1,\ldots,i_k)$ different from $(1,\ldots,d-1)$, $1\leq i_1<\ldots<i_k\leq d$, $1\leq k\leq d$, the coefficients of
  \[ \pmb\ell_{i_1}\wedge\ldots\wedge\pmb\ell_{i_k} \]
  are linearly independent over $\Q$. Let $\La$ be defined by \eqref{eq:lattice}. Then 
  \begin{equation}
    \omega(\La)=\frac{1}{d(d-2)}\,,\quad\omega(\La^\ast)=\infty.
  \end{equation}
\end{theorem}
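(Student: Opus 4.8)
The plan is to analyse the two assertions separately, using the structural hypothesis on $\pmb\ell_1\wedge\ldots\wedge\pmb\ell_{d-1}$ to produce a ``rational direction'' for the dual lattice, and then to combine the subspace theorem with Theorem~\ref{t:lattice_transference} to get a matching upper bound for $\omega(\La)$.

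\emph{Step 1: $\omega(\La^\ast)=\infty$.} Recall that the coefficients of $\pmb\ell_1\wedge\ldots\wedge\pmb\ell_{d-1}$ coincide up to sign with those of the single dual form $\pmb\ell_d^\ast$. Hence the vanishing of the first coefficient of $\pmb\ell_1\wedge\ldots\wedge\pmb\ell_{d-1}$ means exactly that the first coordinate of the vector $\vec u$ spanning $\pmb\ell_d^\ast$ in $\La^\ast$ vanishes; equivalently, $\La^\ast$ contains a nonzero point $\vec u=(0,u_2,\ldots,u_d)$ in the first coordinate plane. By the very definition of $\Pi$ this forces $\Pi(\vec u)=0$ and $\Pi$ takes arbitrarily small values along the integer multiples of $\vec u$, so $\omega(\La^\ast)=\infty$. (This is precisely the degenerate situation from Case~II of the proof of Theorem~\ref{t:lattice_transference}.)

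\emph{Step 2: the lower bound $\omega(\La)\geq\frac{1}{d(d-2)}$.} Since $\omega(\La^\ast)=\infty$, this is immediate from Theorem~\ref{t:lattice_transference} in the degenerate case, or alternatively one reproduces the Minkowski argument of Case~II directly: the hyperplane orthogonal to $\vec u$ contains the first coordinate axis and a rank-$(d-1)$ sublattice $\Gamma\subset\La$, and Minkowski's theorem in that hyperplane yields infinitely many $\vec v\in\Gamma$ with one coordinate tending to infinity and the remaining $d-1$ coordinates of order $|v|^{-1/(d-2)}$, giving $\Pi(\vec v)\ll|\vec v|^{-1/(d(d-2))}$.

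\emph{Step 3: the matching upper bound $\omega(\La)\leq\frac{1}{d(d-2)}$.} This is where the genericity hypotheses on all the other wedge products enter, and I expect it to be the main obstacle. The idea is to apply the subspace theorem to $\pmb\ell_1,\ldots,\pmb\ell_d$: for each $\e>0$, all but finitely many integer points with $\prod_i|\pmb\ell_i(\vec z)|<|\vec z|^{-\e}$ lie in finitely many proper rational subspaces. By the argument of Corollary~\ref{cor:schmidt_finitely_many_points}, on any rational subspace $\cL$ on which the induced forms are ``generic'' there are only finitely many such points; the hypothesis guarantees this for every rational subspace \emph{except} the one corresponding to the exceptional tuple $(1,\ldots,d-1)$, i.e.\ the hyperplane $\cL_0$ on which $\pmb\ell_1,\ldots,\pmb\ell_{d-1}$ become linearly \emph{dependent} (their wedge vanishing in one coordinate being the obstruction to independence on the relevant rational hyperplane). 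So the only source of infinitely many good solutions is $\cL_0\cap\Z^d$, and on $\cL_0$ one must compute the exponent explicitly: there the $d$ induced forms satisfy one linear relation, so effectively $d-1$ of them are independent and one is a combination, and a direct Minkowski/volume computation on the lattice $\cL_0\cap\Z^d\cong\Z^{d-1}$ shows that $\Pi$ restricted to $\cL_0$ has exponent exactly $\frac{1}{d(d-2)}$ — matching Step~2 and confirming that no point of $\La$ outside $\cL_0$ contributes a larger exponent. Assembling Steps~2 and~3 gives $\omega(\La)=\frac{1}{d(d-2)}$.

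The delicate points are: (i) pinning down precisely which rational hyperplane $\cL_0$ is ``exceptional'', and checking that the vanishing of the first coefficient of $\pmb\ell_1\wedge\ldots\wedge\pmb\ell_{d-1}$ is equivalent to the induced forms $\tilde{\pmb\ell}_1,\ldots,\tilde{\pmb\ell}_{d-1}$ being linearly dependent on exactly that hyperplane; and (ii) the bookkeeping in the inductive descent of Corollary~\ref{cor:schmidt_finitely_many_points}, ensuring that the exceptional behaviour propagates to only \emph{one} chain of subspaces and that the final exponent computation on $\cL_0$ gives $\frac{1}{d(d-2)}$ and not something larger. Everything else is routine normalisation and Minkowski-type estimates of the kind already used in the proof of Theorem~\ref{t:lattice_transference}.
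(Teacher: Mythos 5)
Your overall architecture is the same as the paper's: deduce $\omega(\La^\ast)=\infty$ from the vanishing coefficient (which places a nonzero dual lattice point in a coordinate hyperplane), get the lower bound $\omega(\La)\geq\frac{1}{d(d-2)}$ from Theorem~\ref{t:lattice_transference}, and prove the matching upper bound by separating the single exceptional rational hyperplane $\cL_0$ (on which $\pmb\ell_1,\ldots,\pmb\ell_{d-1}$ restrict to dependent forms) from all other rational subspaces, where the inductive argument of Corollary~\ref{cor:schmidt_finitely_many_points} applies verbatim. (Whether the relevant coordinate plane is $\{z_1=0\}$ or $\{z_d=0\}$ is only a matter of how one orders the coefficients of the multivector; the paper takes $\cL_1=\{z_d=0\}$.) Steps~1 and~2 and the ``outside $\cL_0$'' half of Step~3 are sound.

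The genuine gap is your claim that ``a direct Minkowski/volume computation on $\cL_0\cap\Z^d$ shows that $\Pi$ restricted to $\cL_0$ has exponent exactly $\frac{1}{d(d-2)}$''. Minkowski-type arguments only produce points where $\Pi$ is small, i.e.\ \emph{lower} bounds for the exponent --- that is your Step~2. The upper bound requires an inequality $\prod_{1\leq i\leq d}|\pmb\ell_i(\vec z)|\geq c\,|\vec z|^{-\frac{1}{d-2}-\e}$ valid for \emph{every} nonzero $\vec z\in\cL_0\cap\Z^d$, and no volume computation can deliver that; it needs a Diophantine input, and this is precisely where the hypothesis on the tuples different from $(1,\ldots,d-1)$ is consumed. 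The paper's argument is: for each $j\in\{1,\ldots,d-1\}$ the $(d-1)$-tuple $(\pmb\ell_i)_{i\neq j}$ contains $\pmb\ell_d$, hence is not the exceptional tuple, so its restrictions to $\cL_0$ are $d-1$ linearly independent forms satisfying the hypothesis of Corollary~\ref{cor:schmidt_finitely_many_points}, giving $\prod_{i\neq j}|\pmb\ell_i(\vec z)|\geq c_4|\vec z|^{-\e}$ on $\cL_0$. Multiplying these $d-1$ inequalities yields
\[ \big|\pmb\ell_d(\vec z)\big|\Bigl(\prod_{1\leq i\leq d}\big|\pmb\ell_i(\vec z)\big|\Bigr)^{d-2}\geq c_4^{d-1}|\vec z|^{-(d-1)\e}, \]
and since $|\pmb\ell_d(\vec z)|\ll|\vec z|$ this gives the required bound, hence $\Pi(\vec x)\geq c\,|\vec x|^{-\frac{1}{d(d-2)}-\e}$ on all of $\La$. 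Without this (or an equivalent) argument the matching upper bound, and therefore the equality $\omega(\La)=\frac{1}{d(d-2)}$, is not established.
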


\begin{proof}
  Since the first coefficient of $\pmb\ell_1\wedge\ldots\wedge\pmb\ell_{d-1}$ is zero, there is a nonzero point of $\La^\ast$ in
  \[ \cL_1=\Big\{ \vec z=(z_1,\ldots,z_d)\in\R^d \,\Big|\, z_d=0 \Big\}. \]
  Hence $\omega(\La^\ast)=\infty$, so, by Theorem \ref{t:lattice_transference} it suffices to show that
  \begin{equation} \label{eq:inverse_degenerate_inequality}
    \omega(\La)\leq\frac{1}{d(d-2)}\,.
  \end{equation}
  It follows from the hypothesis that if $\cL$ is an arbitrary rational subspace of $\R^d$ different from $\cL_1$, $\dim\cL=k$, then any $k$ of the given linear forms induce $k$ linearly independent linear forms in $\cL$. Thus, repeating the argument of Corollary \ref{cor:schmidt_finitely_many_points} one can show that for any $\e>0$ and all $\vec z\in\Z^d\backslash\cL_1$ we have
  \begin{equation} \label{eq:outside_L1}
    \prod_{1\leq i\leq d}\big|\pmb\ell_i(\vec z)\big|\geq c_3|\vec z|^{-\e},\qquad c_3=c_3(\e,\pmb\ell_1,\ldots,\pmb\ell_d).
  \end{equation}
  
  As for $\cL_1$, by the hypothesis any $d-1$ forms 
  \[ \pmb\ell_{i_1}(\vec z),\ldots,\pmb\ell_{i_{d-2}}(\vec z),\pmb\ell_d(\vec z),\qquad 1\leq i_1<\ldots<i_{d-2}\leq d-1, \] 
  induce linearly independent forms $\tilde{\pmb\ell}_1(\tilde{\vec z}),\ldots,\tilde{\pmb\ell}_{d-1}(\tilde{\vec z})$ in $\cL_1$ which satisfy the hypothesis of Corollary \ref{cor:schmidt_finitely_many_points}. Therefore, for each $\e>0$, each $j\in\{1,\ldots,d-1\}$ and each nonzero $\vec z\in\cL_1$ we have
  \[ \prod_{\substack{1\leq i\leq d \\ i\neq j}}\big|\pmb\ell_i(\vec z)\big|\geq c_4|\vec z|^{-\e},\qquad c_4=c_4(\e,\pmb\ell_1,\ldots,\pmb\ell_d). \]
  Hence
  \[ \big|\pmb\ell_d(\vec z)\big|\Bigg(\prod_{1\leq i\leq d}\big|\pmb\ell_i(\vec z)\big|\Bigg)^{d-2}=
     \prod_{1\leq j\leq d-1}\Bigg(\prod_{\substack{1\leq i\leq d \\ i\neq j}}\big|\pmb\ell_i(\vec z)\big|\Bigg)\geq c_4^{d-1}|\vec z|^{-(d-1)\e}. \]
  Thus, taking into account \eqref{eq:equivalence_of_maxima}, we see that for each $\e>0$ and each nonzero $\vec z\in\cL_1$ we have
  \begin{equation} \label{eq:inside_L1}
    \prod_{1\leq i\leq d}\big|\pmb\ell_i(\vec z)\big|\geq c_5|\vec z|^{-\frac{1}{d-2}-\e},\qquad c_5=c_5(\e,\pmb\ell_1,\ldots,\pmb\ell_d).
  \end{equation}
  Once again taking into account \eqref{eq:equivalence_of_maxima}, we get from \eqref{eq:outside_L1} and \eqref{eq:inside_L1} that for each $\e>0$ and each $\vec x\in\La$
  \[ \Pi(\vec x)\geq c_6|\vec x|^{-\frac{1}{d(d-2)}-\e},\qquad c_6=c_6(\e,\La), \]
  whence \eqref{eq:inverse_degenerate_inequality} follows.
\end{proof}

  It is not difficult to see that similar argument can be used to construct lattices with $\omega(\La^\ast)=\infty$ and $\omega(\La)$ equal to any of the values
  \begin{equation} \label{eq:spectrum_from_schmidt}
    \frac{k(d-k-l)}{dl}\,,\qquad
      \begin{array}{l}
        k\in\{1,\ldots,d-2\}, \\
        \hskip 1mm
        l\in\{1,\ldots,d-k-1\}.
      \end{array}
  \end{equation}
  To do so one should construct $\pmb\ell_1(\vec z),\ldots,\pmb\ell_d(\vec z)$ with algebraic coefficients such that the $k$-dimensional subspace determined by
  \[ \pmb\ell_1(\vec z)=\ldots=\pmb\ell_{d-k}(\vec z)=0 \]
  is contained in a rational subspace of dimension $k+l\leq d-1$, but is not contained in any rational subspace of smaller dimension. Then Minkowski's convex body theorem can be applied to prove the inequality
  \[ \omega(\La)\geq\frac{k(d-k-l)}{dl}\,, \]
  and the subspace theorem to prove the inverse one.
  
  However, \eqref{eq:spectrum_from_schmidt} are the only nontrivial values of $\omega(\La)$ this method gives. Even the question whether there are lattices with finite nonzero $\omega(\La)$ different from \eqref{eq:spectrum_from_schmidt} is still open.
  

%
%

\end{document}